\title{A weak variant of Hindman's Theorem stronger than Hilbert's Theorem\footnote{The work was done partially while the author was visiting the Institute for Mathematical Sciences, National University of Singapore in 2016. The visit was supported by the Institute.}}
\author{Lorenzo Carlucci \\
  Dipartimento di Informatica, Sapienza --- Universit\`a di Roma.\\
  \texttt{carlucci@di.uniroma1.it}}
\date{\today{}}
\newtheorem{definition}{Definition}
\newtheorem{proposition}{Proposition}
\newtheorem{corollary}{Corollary}
\newtheorem{fact}{Fact}
\newtheorem{open.problem}{Open Problem}
\newtheorem*{theorem*}{Theorem}
\newtheorem*{corollary*}{Corollary}
\newtheorem*{proposition*}{Proposition*}
\newtheorem*{lemma*}{Lemma}
\newtheorem*{fact*}{Fact}
\newtheorem*{claim*}{Claim}
\newtheorem*{open.problem*}{Open Problem}
\newtheorem*{remark*}{Remark}
\newtheorem*{example*}{Example}
\newtheorem*{exercise*}{Exercise}
\newtheorem*{ack*}{Acknowledgment}
\newcommand\Nat{\mathbf{N}}
\newcommand\RCA{\mathbf{RCA}}
\newcommand\ACA{\mathbf{ACA}}
\newcommand\WKL{\mathbf{WKL}}
\newcommand\HT{\mathbf{HT}}
\newcommand\IPT{\mathbf{IPT}}
\newcommand\SIPT{\mathbf{SIPT}}
\newcommand\SRT{\mathbf{SRT}}
\newcommand\RT{\mathbf{RT}}
\newcommand\AHT{\mathbf{AHT}}
\newcommand\HIL{\mathbf{HIL}}
\newcommand\I{\mathbf{I}}
\newcommand\D{\mathbf{D}}
\begin{document} % ----------- BEGIN DOCUMENT -------------------------

% ---------------------------- TITLE/ABSTRACT PAGE --------------------
\maketitle

% Uncomment this if you want the paper to start in the next page.
%\thispagestyle{empty}
%\clearpage
% ---------------------------------------------------------------------

% --------------------------- CONTENT --------------------------------

\begin{abstract}
Hirst investigated a slight variant of Hindman's Finite Sums Theorem
-- called Hilbert's Theorem -- and proved it equivalent over $\RCA_0$
to the Infinite Pigeonhole Principle for all colors. 
This gave the first example of a natural restriction of Hindman's 
Theorem provably much weaker than Hindman's Theorem itself.
We here introduce another natural variant of Hindman's Theorem 
-- which we name the Adjacent Hindman's Theorem -- and prove 
it to be provable from Ramsey's Theorem for pairs and strictly stronger than 
Hirst's Hilbert's Theorem. The lower bound is obtained by a direct combinatorial 
implication from the Adjacent Hindman's Theorem to the Increasing Polarized Ramsey's 
Theorem for pairs introduced by Dzhafarov and Hirst. 
In the Adjacent Hindman's Theorem homogeneity is required only for 
finite sums of adjacent elements. 
\end{abstract}

\section{Introduction and Motivation}
The strength of Hindman's Theorem is a major open problem in Reverse Mathematics 
(see, e.g., \cite{Mon:11:open}). 
Letting $\HT$ denote the natural formalization of Hindman's Finite Sums Theorem
in the language of arithmetic, the only known upper and lower bounds are the following, 
established thirty years ago by Blass, Hirst and Simpson in \cite{Bla-Hir-Sim:87}:
$$ \ACA_0^+ \geq \HT \geq \ACA_0.$$
Recall that $\ACA_0$ is equivalent to $\RCA_0+\forall X \exists Y (Y=X')$ and that
$\ACA_0^+$ is equivalent to $\RCA_0 + \forall X \exists Y (Y = X^{(\omega)})$. 
As is often the case, the above Reverse Mathematical results are corollaries of 
the following computability-theoretic lower and upper bounds on the complexity of solutions to computable instances of Hindman's Theorem. The following results
are also from \cite{Bla-Hir-Sim:87}:

\begin{enumerate}
\item There exists a computable coloring $c:\Nat\to 2$ such that any solution to 
Hindman's Theorem for $c$ computes $\emptyset'$. 
\item For every computable coloring $c:\Nat\to 2$ there exists a solution set
computable from $\emptyset^{(\omega+1)}$.
\end{enumerate}

Recently there has been some interest in the strength of restrictions of Hindman's Theorem (see \cite{Hir:12:HvsH,DJSW:16}). 

Interestingly, Dzhafarov, Jockusch, Solomon and Westrick \cite{DJSW:16} proved that the only known lower bound on Hindman's Theorem already hits for $\HT^{\leq 3}_4$ (Hindman's Theorem restricted to $4$-colorings and sums of at most $3$ terms) and that $\HT^{\leq 2}_2$ (Hindman's Theorem restricted to $2$-colorings and sums of at most $2$ terms) is unprovable in $\RCA_0$). However, no upper bounds other than those known for the full Hindman's Theorem are known for $\HT^{\leq 2}_2$, let alone $\HT^{\leq 3}_4$. Indeed, it is an open question in Combinatorics whether Hindman's Theorem for sums of at most $2$ terms is already equivalent to the full Hindman's Theorem (see \cite{Hin-Lea-Str:03}, Question 12).

On the other hand, Hirst~\cite{Hir:12:HvsH} investigated a natural restriction of Hindman's Theorem for which a better upper bound can be proved. The variant in question is the following
\begin{definition}[Hilbert's Theorem, \cite{Hir:12:HvsH}]
We denote by $\HIL$ the following principle: Suppose $f:\Nat^{<\Nat}\to k$
is a finite coloring of the finite subsets of the natural numbers. 
Then there is an infinite sequence $\langle X_i\rangle_{i\in\Nat}$
of distinct finite sets and a color $c<k$ such that for every finite
set $F\subset\Nat$ we have $f(\bigcup_{i\in F}X_i)=c$.
\end{definition}
Hilbert's Theorem arises from the Finite Unions Theorem by dropping the condition
that the sequence of sets is such that that $\max(X_i)<\min(X_{i+1})$ for all 
$i\in\Nat$.\footnote{Compare this with our Apartness Condition below.}
Hirst proved that $\HIL$ is equivalent to $\RT^1$ over $\RCA_0$ and therefore is much weaker than Hindman's Theorem. In particular $\HIL$ is finitistically reducible~\cite{Sim:SOSOA}, while Hindman's Theorem is not. 

We introduce another natural variant of Hindman's Theorem, called the Adjacent Hindman Theorem  
and show the following points:
\begin{enumerate} 
\item The Adjacent Hindman's Theorem for $2$ colors -- $\AHT_2$ -- is provable from 
$\RT^2_2$ and hence much weaker than Hindman's Theorem, and
\item $\AHT_2$ implies the Stable Ramsey's Theorem for pairs and $2$ colors, 
and is thus strictly stronger than $\HIL$. 
\end{enumerate}
For the version of the Adjacent Hindman's Theorem for arbitrary finite colorings -- $\AHT$ -- we obtain, 
by the same proofs, that $\AHT$ is provable from $\forall r \RT^2_r$, and
that $\AHT$ implies the Stable Ramsey's Theorem for pairs and arbitrary finite colorings. 
In particular $\AHT$ implies $B\Sigma_3$. 

\section{The Adjacent Hindman's Theorem}
If $n=2^{t_1}+\dots+2^{t_p}$ with $t_1 < \dots < t_p$ let $\lambda(n)=t_1$ and 
$\mu(n)=t_p$, as in~\cite{Bla-Hir-Sim:87}. We consider the following natural variant of Hindman's Theorem: the solution set $H$ is required to be monochromatic only for sums of adjacent elements (with respect
to the increasing enumeration of $H$) and to satisfy the following Apartness
Condition:

\begin{definition}[Apartness Condition] 
A set $X=\{x_1,x_2,\dots\}_<$ satisfies the Apartness Condition (or {\em is apart}) if for all $x,x'\in X$ such that $x < x'$, we have $\mu(x)<\lambda(x')$.
\end{definition}
We use $AS(H)$ (the set of adjacent sums of elements of $H$)
to denote the set of all finite sums of distinct adjacent 
elements of $H=\{h_1,h_2,\dots,\}_<$, where two elements $h$ and $h'$ with 
$h<h'$ are adjacent in $H$
if there is no other element of $H$ between $h$ and $h'$.

\begin{definition}[Adjacent Hindman's Theorem]
$\AHT_k$ is the following principle: For every $c:\Nat\to k$ there exists an infinite set
$H=\{h_1,h_2,h_3\dots,\}_<$ such that all elements of $AS(H)$
have the same $c$-color. Furthermore, $H$ satisfies the  
Apartness Condition\footnote{The Apartness
Condition can be in some cases dropped at the cost of using more colors. For the present discussion
we preferred to include it in the statement of the Adjacent Hindman's Theorem since it typically
simplifies the proofs, and is for free if $\RT^2_2$ is assumed.}.
$\AHT$ denotes $\forall k \AHT_k$.
\end{definition}

Obviously we can define $AS^{\leq n}(H)$ and $AS^{=n}(H)$ with the intuitive meaning, and 
corresponding Hindman-type theorems. 

The Adjacent Ramsey Principles couple two features: they guarantee homogeneity for sums of arbitrary 
length, but severely constrain the way the terms of these sums are chosen.

\section{Upper Bound: Adjacent Hindman's Theorem follows from Ramsey for pairs}
We first show that it is very easy to establish an upper bound on $\AHT_2$ and $\AHT$. 
This should be contrasted with the case of Hindman's Theorem restricted to 
sums of at most two terms ($\HT^{\leq 2}_2$ in the notation of \cite{DJSW:16}), 
for which no upper bound other than $\ACA_0^+$ is currently known. 

\begin{proposition}\label{prop:rt22_to_aht_2} Over $\RCA_0$,
$\RT^2_2$ implies $\AHT_2$.
\end{proposition}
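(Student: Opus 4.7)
The plan is to reduce the given $2$-coloring $c\colon\Nat\to 2$ to a $2$-coloring of pairs, whose homogeneous set will supply the ``gluing indices'' from which the witness set $H$ for $\AHT_2$ is built. The key observation is that if one fixes an explicitly apart reference sequence $a_i := 2^{i-1}$, then every sum of consecutive $a_\ell$'s has a clean telescoped form: $a_{i+1}+a_{i+2}+\dots+a_j = 2^j-2^i$. This reduces the question of what color $c$ assigns to such a sum to a simple function of the endpoints $i,j$.

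Accordingly, I would define $d\colon[\Nat]^2\to 2$ by $d(\{i,j\}) := c(2^j-2^i)$ for $i<j$. Since $d$ is $\Delta^0_1$ in $c$, it exists in $\RCA_0$. Applying $\RT^2_2$ yields an infinite $d$-homogeneous set $I=\{i_0<i_1<i_2<\cdots\}$ of some color $e<2$. I then set $h_k := 2^{i_k}-2^{i_{k-1}}$ for $k\geq 1$ and $H:=\{h_1,h_2,\dots\}$. Each $h_k$ has binary support exactly on the contiguous block $\{i_{k-1},i_{k-1}+1,\dots,i_k-1\}$, and these blocks are disjoint and strictly increasing as $k$ grows, so $\mu(h_k)=i_k-1 < i_k = \lambda(h_{k+1})$ and $H$ satisfies the apartness condition.

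Monochromaticity is then immediate: any element of $AS(H)$ is of the form $h_k+h_{k+1}+\dots+h_l$ for some $1\leq k\leq l$, and this sum telescopes to $2^{i_l}-2^{i_{k-1}}$, whose $c$-color is $d(\{i_{k-1},i_l\})=e$ by homogeneity of $I$; the case $k=l$ covers the single elements $h_k$. The whole argument is carried out in $\RCA_0$, with the only non-elementary ingredient being the single invocation of $\RT^2_2$. I do not anticipate any serious obstacle: the entire content of the proof lies in the choice of the pair coloring $d$, after which apartness and $c$-monochromaticity for all adjacent sums fall out of the telescoping identity.
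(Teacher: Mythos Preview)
Your argument is correct and is essentially the same as the paper's: both define the pair coloring by $c$ applied to a block of consecutive powers of $2$ determined by the pair, and both take $H$ to consist of the blocks cut out by the homogeneous set. Your telescoped form $2^{i_l}-2^{i_{k-1}}$ is exactly the paper's sum $2^{j_n+1}+\dots+2^{j_{n+t+1}}$ after an index shift, so the two proofs coincide up to notation.
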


\begin{proof}
Fix a coloring $c:\Nat \to 2$. This induces a coloring $f$ of $[\Nat]^2$
in $2$ colors by setting $f(i,j):=c(2^{i+1}+\dots+2^{j-1}+2^j)$. 
By $\RT^2_2$ let $J=\{j_1,j_2,\dots\}_<$ be 
an infinite homogeneous set for $f$, of color $i<2$. Consider the set 
$$H=\{ (2^{j_1+1}+\dots+2^{j_2}),(2^{j_2+1}+\dots+2^{j_3}),\dots,(2^{j_n+1}+\dots+2^{j_{n+1}}),\dots\}.$$
We claim that $H$ satisfies $\AHT$ for $c$. First, $c(2^{j_n+1}+\dots+2^{j_{n+1}})=f(j_n,j_{n+1})=i$.
Secondly, consider an arbitrary finite sum of adjacent elements of $H$:
$$s= (2^{j_n+1}+\dots+2^{j_{n+1}}) + (2^{j_{n+1}+1}+\dots+2^{j_{n+2}}) + \dots + (2^{j_{n+t}+1}+\dots+2^{j_{n+t+1}}).$$
We have that $c(s)=f(j_n,j_{n+t+1})=i$. Finally, it is obvious that $H$ satisfies the Apartness Condition. 
\end{proof}

Obviously the above proof can be used to show over $\RCA_0$ that $\RT^2_k$ implies $\AHT_k$
uniformly in $k$ so we have the following corollary.

\begin{corollary}
Over $\RCA_0$, $\RT^2$ implies $\AHT$.
\end{corollary}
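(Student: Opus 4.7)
The plan is to observe that the proof of Proposition~\ref{prop:rt22_to_aht_2} is completely uniform in the number of colors: nothing in it depends on $k=2$ beyond the fact that $\RT^2_2$ supplies an infinite monochromatic set. So I would argue in $\RCA_0 + \RT^2$, fix an arbitrary $k$ and an arbitrary coloring $c:\Nat\to k$, and run the same construction.

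More precisely, the first step is to define the induced coloring $f:[\Nat]^2\to k$ by exactly the same formula $f(i,j):=c(2^{i+1}+\dots+2^{j-1}+2^j)$; note that this definition does not mention $k$ at all, so it goes through verbatim. The second step is to invoke $\RT^2_k$ (available since we are assuming $\RT^2=\forall r\,\RT^2_r$) to extract an infinite $f$-homogeneous set $J=\{j_1,j_2,\dots\}_<$ of some color $i<k$. The third step is to form the set $H=\{(2^{j_n+1}+\dots+2^{j_{n+1}}):n\geq 1\}$ exactly as before. The verification that every adjacent sum of elements of $H$ collapses to a single block $(2^{j_n+1}+\dots+2^{j_{n+t+1}})$, which by construction has $c$-color $f(j_n,j_{n+t+1})=i$, is identical to the one in Proposition~\ref{prop:rt22_to_aht_2}; so is the verification of the Apartness Condition, since the blocks have disjoint intervals of binary exponents. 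Since $c$ and $k$ were arbitrary, this yields $\AHT_k$ for every $k$, i.e., $\AHT$.

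There is essentially no obstacle: the only point worth checking is that the construction of $f$ and $H$ is uniform in $k$ so that it can be formalized in $\RCA_0$, but this is immediate from the fact that $k$ enters only through the conclusion of $\RT^2_k$ and not through the definitions of $f$ or $H$. Hence the corollary follows by essentially quoting the previous proof with $2$ replaced by $k$ throughout.
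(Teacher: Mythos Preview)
Your proposal is correct and matches the paper's own argument essentially verbatim: the paper simply remarks that the proof of Proposition~\ref{prop:rt22_to_aht_2} shows $\RT^2_k\to\AHT_k$ uniformly in $k$, which is exactly the observation you spell out in detail.
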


%\begin{corollary} 
%$\AHT_2$ does not imply $\WKL_0$.
%\end{corollary}

\section{Lower Bound: Adjacent Hindman's Theorem implies Increasing Polarized Ramsey's Theorem for pairs}

In this section we prove a direct implication from the Adjacent Hindman's Theorem 
for $k$-colorings to the Increasing Polarized Ramsey's Theorem for pairs and $k$-colorings, 
for any $k$. This yields some lower bounds on $\AHT_k$ and on $\AHT$. 
Note that $\AHT_2$ is finitistically reducible (in the sense of Simpson's) since it follows from $\RT^2_2$ 
(see \cite{Pat-Yok:16} for a proof that Ramsey for pairs is finitistically reducible).

The following version of Ramsey's Theorem is introduced in \cite{Dza-Hir:11}.

\begin{definition}[Increasing Polarized Ramsey Theorem]
$\IPT^n_k$ is the following principle: for every $f:[\Nat]^n\to k$ there exists a sequence
$\langle H_1,\dots,H_n\rangle$ of infinite sets such that there exists $c < k$ such that
for all increasing tuple $(x_1,\dots,x_n)\in H_1\times\dots\times H_n$ we have $f(x_1,\dots,x_n)=c$.
The sequence $\langle H_1,\dots,H_n\rangle$ is called increasing p-homogeneous for $f$.
\end{definition}

We first show that $\AHT_2$ implies $\IPT^2_2$. We mention without proof 
that the same implication can be proved for $\HT^{\leq 2}_4$.

\begin{proposition}\label{prop:AHT2_to_IPT22}
Over $\RCA_0$, $\AHT_2$ implies $\IPT^2_2$.
\end{proposition}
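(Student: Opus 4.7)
The plan is to reduce $\IPT^2_2$ to $\AHT_2$ by encoding a $2$-coloring of pairs as a $2$-coloring of singletons via the standard $\lambda,\mu$-trick of \cite{Bla-Hir-Sim:87}, then exploiting the Apartness Condition to decode. Concretely, given $f:[\Nat]^2\to 2$, I would define $c:\Nat\to 2$ by setting $c(n)=f(\lambda(n),\mu(n))$ whenever $\lambda(n)<\mu(n)$ (equivalently, whenever $n$ has at least two bits set) and $c(n)=0$ otherwise. The design behind this definition is that, for any sum $s$ of elements from an apart set, no carries occur in base $2$, so $\lambda(s)$ and $\mu(s)$ faithfully recover the least and greatest bit-positions used by the summands.

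Next, I would apply $\AHT_2$ to $c$ to obtain an infinite apart set $H=\{h_1<h_2<\dots\}$ such that every element of $AS(H)$ has a single color $i<2$. Set
\[
H_1=\{\lambda(h_n):n\in\Nat\},\qquad H_2=\{\mu(h_n):n\in\Nat\}.
\]
Apartness gives $\lambda(h_n)\le\mu(h_n)<\lambda(h_{n+1})$, so both $H_1$ and $H_2$ are strictly increasing in $n$, hence infinite. Both sets are readily $\Delta^0_1$ in $H\oplus f$, so their existence poses no issue in $\RCA_0$.

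To verify that $\langle H_1,H_2\rangle$ is increasing p-homogeneous with color $i$, take arbitrary $x_1=\lambda(h_p)\in H_1$ and $x_2=\mu(h_q)\in H_2$ with $x_1<x_2$. The inequality $\mu(h_m)<\lambda(h_{m+1})$ from apartness forces $p\le q$ (otherwise $x_2<x_1$). Consider the adjacent sum $s=h_p+h_{p+1}+\dots+h_q$ (which is simply $h_p$ if $p=q$). Apartness makes the binary supports of $h_p,\dots,h_q$ lie in pairwise disjoint intervals of bit-positions, so no carries occur in the sum and $\lambda(s)=\lambda(h_p)=x_1$, $\mu(s)=\mu(h_q)=x_2$. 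Since $x_1<x_2$, the definition of $c$ yields $c(s)=f(x_1,x_2)$, while $s\in AS(H)$ gives $c(s)=i$; hence $f(x_1,x_2)=i$ as required.

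There is no real obstacle here: the main content is the $\lambda/\mu$-encoding and the observation that the Apartness Condition lets adjacent sums realize every pair $(\lambda(h_p),\mu(h_q))$ with $p\le q$ as $(\lambda(s),\mu(s))$ for some $s\in AS(H)$. The minor point to be careful about is ensuring the case analysis is complete (the case $p>q$ must be ruled out, and the case $p=q$ must use that $\lambda(h_p)<\mu(h_p)$ since $x_1<x_2$). Since the construction is uniform in the number of colors, the same argument will immediately upgrade to $\AHT_k\Rightarrow\IPT^2_k$ for arbitrary $k$.
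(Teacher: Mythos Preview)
Your proposal is correct and follows essentially the same approach as the paper: the same $\lambda/\mu$-encoding of $f$ into a singleton coloring, the same choice $H_1=\{\lambda(h_n)\}$, $H_2=\{\mu(h_n)\}$, and the same use of apartness to identify $\lambda$ and $\mu$ of an adjacent sum $h_p+\cdots+h_q$ with $\lambda(h_p)$ and $\mu(h_q)$. Your handling of the case analysis (ruling out $p>q$ via apartness, and noting that $p=q$ forces $\lambda(h_p)<\mu(h_p)$ since $x_1<x_2$) matches the paper's, and your remark on the uniform extension to $k$ colors is exactly what the paper records next.
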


\begin{proof}

Let $f:[\Nat]^2\to 2$ be given. Define $g:\Nat\to 2$ as follows.

$$
g(n):=
\begin{cases}
f(\lambda(n),\mu(n)) & \mbox{ if  } \lambda(n) \neq \mu(n),\\
0 & \mbox{ otherwise.}\\
\end{cases}
$$

Let $H$ witness $\AHT_2$ for $g$: $H$ is an infinite set satisfying the Apartness Condition and 
such that $AS(H)$ is monochromatic under $g$. Let the color be $c\in\{0,1\}$. 

Let 
$$H_1 := \{\lambda(n)\,:\, n \in H\}$$ 
and 
$$H_2 := \{\mu(n)\,:\, n \in H\}.$$ 

We claim that $\langle H_1,H_2\rangle$ is increasing p-homogeneous for $f$. 

First observe that, letting $H=\{h_1,h_2,\dots\}_<$, we have
$H_1 = \{ \lambda(h_1),\lambda(h_2),\dots\}_<$ and 
$H_2 = \{ \mu(h_1),\mu(h_2),\dots\}_<$. 
This is so because $\lambda(h_1)\leq\mu(h_1)<\lambda(h_2)\leq\mu(h_2)<\dots$ by the Apartness Condition.

Then we claim that $f(x_1,x_2) = c$ for every increasing pair $(x_1,x_2)\in H_1\times H_2$.
Note that $f(x_1,x_2) = f(\lambda(h_i),\mu(h_j))$ for some $i \leq j$. Note that if $i=j$ then $\lambda(h_i)<\mu(h_i)$
else the pair is not strictly increasing. Since $AS(W)$ is homogeneous, we
have
$$c=g(h_i)=g(h_i+h_{i+1})=g(h_i+h_{i+1}+h_{i+2})=\dots= g(h_i+h_{i+1}+\dots + h_{j-1}+h_j).$$
Now, if $i=j$, then 
$$ f(x_1,x_2) = f(\lambda(h_i),\mu(h_i))=g(h_i)=c.$$
If $i < j$, then 
$$ f(x_1,x_2) = f(\lambda(h_i),\mu(h_j))=g(h_i+h_{i+1}+\dots+h_{j-1}+h_j)=c.$$
since $\lambda(h_i+h_{i+1}+\dots + h_{j-1}+h_j)=\lambda(h_i)$ and $\mu(h_i+h_{i+1}+\dots + h_{j-1}+h_j)=\mu(h_j)$.
Hence in any case $f(x_1,x_2)=c$, as needed.
This shows that $\langle H_1,H_2\rangle$ is increasing p-homogeneous of color $c$ for $f$.
\end{proof}

\begin{corollary}
$\AHT_2$ is strictly stronger than $\HIL$.
\end{corollary}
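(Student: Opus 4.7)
The plan is to combine Proposition~\ref{prop:AHT2_to_IPT22} with standard Reverse Mathematics facts. The corollary has two components: the implication $\AHT_2 \to \HIL$ and its strictness.

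For the implication, by Proposition~\ref{prop:AHT2_to_IPT22} it suffices to show $\IPT^2_2 \to \HIL$. I would route this through $\SRT^2_2$. Given a stable $f:[\Nat]^2\to 2$, an $\IPT^2_2$ p-homogeneous pair $\langle H_1, H_2\rangle$ of color $c$ must satisfy $\lim_y f(x,y) = c$ for every $x\in H_1$, since $H_2$ supplies infinitely many $y$ with $f(x,y)=c$ and $f$ is stable. One can then greedily thin $H_1$ to an infinite subset $H$ on which $f$ is constantly $c$: each successive element is obtained by a computable search that terminates because stability and the infinitude of $H_1$ together guarantee that cofinitely many elements of $H_1$ have $f$-value $c$ against the already-chosen points. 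This gives $\IPT^2_2 \to \SRT^2_2$ over $\RCA_0$. Hirst's results then yield $\SRT^2_2 \to B\Sigma^0_2$ and $B\Sigma^0_2 \equiv \RT^1 \equiv \HIL$, completing the implication.

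For the strictness, since the argument above shows $\AHT_2 \to \SRT^2_2$, it suffices to observe that $\SRT^2_2$ is known to be strictly stronger than $B\Sigma^0_2 \equiv \HIL$. In particular, the standard computability-theoretic separations (e.g., from Cholak--Jockusch--Slaman) exhibit computable stable $2$-colorings of pairs whose solution complexity exceeds what can be realized in standard models of $\RCA_0 + \RT^1$, providing a model of $\RCA_0 + \HIL + \lnot \SRT^2_2$, hence of $\RCA_0 + \HIL + \lnot \AHT_2$.

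The main obstacle will be the verification of $\IPT^2_2 \to \SRT^2_2$ in $\RCA_0$, particularly checking that the greedy thinning succeeds without additional induction, and pointing to the correct reference for the strict separation $\HIL < \SRT^2_2$. Everything else reduces to Hirst's theorems.
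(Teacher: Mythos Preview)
Your overall route is exactly the paper's: from Proposition~\ref{prop:AHT2_to_IPT22} pass through $\SRT^2_2$, then invoke that $\SRT^2_2$ is strictly stronger than $B\Sigma^0_2 \equiv \RT^1 \equiv \HIL$. The paper just factors the middle step differently, citing Dzhafarov--Hirst for $\IPT^2_2 \to \D^2_2$ and Chong--Lempp--Yang for $\D^2_2 \to \SRT^2_2$, and then Cholak--Jockusch--Slaman for the strict separation $\SRT^2_2 > B\Sigma^0_2$.

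The obstacle you flag is genuine and is exactly where your sketch falls short. Your first step --- observing that every $x\in H_1$ has $\lim_y f(x,y)=c$ because $H_2$ supplies infinitely many witnesses and $f$ is stable --- is fine and is essentially the proof of $\IPT^2_2\to\D^2_2$. But the greedy thinning of $H_1$ is precisely the content of $\D^2_2\to\SRT^2_2$, and your justification (``cofinitely many elements of $H_1$ have $f$-value $c$ against the already-chosen points'') tacitly collects a common bound on the $n{+}1$ stabilization thresholds $t_0,\dots,t_n$. Each ``$\exists t\,\forall y>t\,(f(h_i,y)=c)$'' is $\Sigma^0_2$, so getting a uniform bound is an instance of $B\Sigma^0_2$; the dual pigeonhole version of the argument likewise invokes $\RT^1_{n+1}$ for a variable $n$. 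Having $H_2$ around does not visibly help, since you need $h_{n+1}\in H_1$ to keep the recursion going, and $H_1\cap H_2$ may be finite. That this step can nonetheless be carried out over $\RCA_0$ is the nontrivial result of Chong--Lempp--Yang; you should either cite it (as the paper does, via $\D^2_2$) or reproduce their argument, rather than rely on the naive thinning. A small attribution point: $\SRT^2_2\to B\Sigma^0_2$ is Cholak--Jockusch--Slaman, not Hirst; Hirst supplies $\RT^1\equiv B\Sigma^0_2$ and $\HIL\equiv\RT^1$.
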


\begin{proof}
Let $\D^2_2$ be the assertion that for every $\{0,1\}$-valued function $f(x,s)$ such that for 
all $x$ the limit of $f(x,s)$ exists for $s\to \infty$ there is an infinite set $G$ and a color
$j<2$ such that $\lim_s f(x,s)=j$ for all $x\in G$.
By Proposition 3.5 of \cite{Dza-Hir:11} we have that $\RCA_0 \vdash \IPT^2_2\to \D^2_2$. 
From~\cite{Cho-Lem-Yan:10} we have that $\RCA_0\vdash \D^2_2 \to \SRT^2_2$ (where $\SRT^2_2$
is Ramsey's Theorem for pairs for stable colorings), and 
from~\cite{Cho-Joc-Sla:01} that $\SRT^2_2$ is strictly stronger than $B\Sigma_2$. The latter is
equivalent to $\RT^1$ and hence to $\HIL$.
\end{proof}

The above Proposition should be compared with Corollary 2.4 of \cite{DJSW:16}: 
$\RCA_0 + B\Pi^0_1 + \HT^{\leq 2}_2 \vdash \SRT^2_2$. It seems to be unknown 
whether $\SRT^2_2$, or even $\SRT^2_2+B\Sigma_2$ implies $\IPT^2_2$.

\iffalse
\begin{corollary}
$\RT^1$ does not imply $\AHT_2$ over $\RCA_0$.
\end{corollary}

\begin{proof}
By Proposition \ref{prop:AHT2_to_IPT22} $\RCA_0+\AHT_2\vdash\IPT^2_2$. By 
Proposition 3.1 of \cite{Dza-Hir:11} $\RCA_0+\IPT^2_2\vdash\SIPT^2_2$ (where we
denote by $\SIPT^2_2$ the increasing polarized Ramsey's Theorem restricted to stable colorings). 
By Proposition 3.3 of \cite{Dza-Hir:11} 
$\SRT^2_2$ is equivalent over $\RCA_0$ to $\SIPT^2_2 + \RT^1$.
Suppose that $\RCA_0 + \RT^1 \vdash \AHT_2$. Then also $\RCA_0 + \RT^1\vdash \SRT^2_2$.
But $\RT^1$ holds in any $\omega$-model of $\WKL_0$ consisting of low sets, while
$\SRT^2_2$ does not hold in any such model \cite{Dow-Hir-Lem-Sol:01}. 
\end{proof}

We do not know whether $\AHT_2$ implies $\RT^1$ and conjecture the answer to be negative.
In a previous version of the paper we claimed to have a proof of that $\AHT_2$ implies
$\RT^1$ but the argument was flawed, as observed by Francesco Carlo Lepore \cite{Lep:pc}. 
\fi

It is easy to observe that the proof of Proposition \ref{prop:AHT2_to_IPT22} yields the following
proposition. We denote by $\IPT^2$ the principle $\forall k \IPT^2_k$.

\begin{proposition}\label{prop:AHT_to_IPT2}
$\RCA_0\vdash \forall k(\AHT_k\to\IPT^2_k)$, and 
$\RCA_0\vdash \AHT \to \IPT^2$. 
\end{proposition}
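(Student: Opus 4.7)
The plan is to observe that the proof of Proposition~\ref{prop:AHT2_to_IPT22} already goes through verbatim with $k$ in place of $2$, so essentially no new argument is required. For the first clause, I would fix $k$ and $f:[\Nat]^2\to k$ and define $g:\Nat\to k$ by the same formula as before: $g(n)=f(\lambda(n),\mu(n))$ when $\lambda(n)\neq\mu(n)$, and $g(n)=0$ otherwise. Since $0<k$, the definition still lands in the codomain. I would then apply $\AHT_k$ to $g$ to obtain an infinite apart set $H=\{h_1,h_2,\dots\}_<$ whose adjacent sums all share a common color $c<k$.

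Setting $H_1=\{\lambda(h):h\in H\}$ and $H_2=\{\mu(h):h\in H\}$, I would verify, exactly as in the proof for $k=2$, that $\langle H_1,H_2\rangle$ is increasing p-homogeneous for $f$ of color $c$. The Apartness Condition still gives the natural enumerations $H_1=\{\lambda(h_1),\lambda(h_2),\dots\}_<$ and $H_2=\{\mu(h_1),\mu(h_2),\dots\}_<$, and for any increasing pair $(x_1,x_2)\in H_1\times H_2$, writing $x_1=\lambda(h_i)$ and $x_2=\mu(h_j)$ with $i\leq j$, the identities $\lambda(h_i+\dots+h_j)=\lambda(h_i)$ and $\mu(h_i+\dots+h_j)=\mu(h_j)$ together with the monochromaticity of $AS(H)$ yield $f(x_1,x_2)=g(h_i+\dots+h_j)=c$. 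The corner case $i=j$ is handled by noting that a strictly increasing pair forces $\lambda(h_i)<\mu(h_i)$, hence $g(h_i)=f(\lambda(h_i),\mu(h_i))=c$.

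For the second clause, since the entire derivation is uniform in $k$ and formalizes in $\RCA_0$, invoking $\AHT$ (which gives $\AHT_k$ for every $k$) yields $\IPT^2_k$ for every $k$, i.e.\ $\IPT^2$. There is no real obstacle here; the author flags the proposition precisely because the earlier proof was written in a manner insensitive to the number of colors, so it is just a matter of checking that the default color $0$ used in the ``otherwise'' branch of $g$ is still available in the codomain and that $\AHT_k$ supplies a homogeneous color in the enlarged palette $\{0,\dots,k-1\}$.
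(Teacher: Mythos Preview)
Your proposal is correct and matches the paper's approach exactly: the paper itself offers no separate proof for this proposition, merely noting that ``it is easy to observe that the proof of Proposition~\ref{prop:AHT2_to_IPT22} yields'' the result. Your write-up spells out precisely the uniform-in-$k$ verification the paper leaves implicit.
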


We have the following corollary, improving on the results of Section \ref{sec:Sigma2}.

\begin{corollary}
$\RCA_0 + \AHT \vdash B\Sigma^0_3$.
\end{corollary}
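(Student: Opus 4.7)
The plan is to combine Proposition \ref{prop:AHT_to_IPT2} with the one-level-up analogue of the chain already used in the corollary following Proposition \ref{prop:AHT2_to_IPT22}. By Proposition \ref{prop:AHT_to_IPT2} we have $\RCA_0 \vdash \AHT \to \IPT^2$, i.e., $\IPT^2_k$ holds for every $k$. So the task reduces to showing $\RCA_0 + \IPT^2 \vdash B\Sigma^0_3$.

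For this I would simply rerun the reasoning of that previous corollary, but now uniformly in the number of colors. Proposition 3.5 of \cite{Dza-Hir:11} gives $\IPT^2_k \to \D^2_k$, and \cite{Cho-Lem-Yan:10} gives $\D^2_k \to \SRT^2_k$; both implications are uniform in $k$ by inspection of the arguments (nothing in the stable/limit construction depends on having exactly two colors). Quantifying over $k$, we therefore obtain $\RCA_0 + \AHT \vdash \SRT^2_k$ for every finite $k$, i.e., the Stable Ramsey Theorem for pairs with all finite colorings.

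At that point I would invoke the known fact that stable Ramsey for pairs with arbitrarily many finite colors implies $B\Sigma^0_3$ over $\RCA_0$: this is the one-level-up analogue of Hirst's theorem $\RT^1 \equiv B\Sigma^0_2$, where the single pigeonhole application in Hirst's argument is replaced by one application of the stable Ramsey principle at the next jump. Plugging this into the previous step finishes the proof.

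The expected main obstacle is not combinatorial but bibliographical and notational: verifying that the two cited intermediate implications ($\IPT^2_k \to \D^2_k$ and $\D^2_k \to \SRT^2_k$) genuinely go through uniformly in $k$, and pointing correctly to the result that $\SRT^2$ (for all finite colorings) implies $B\Sigma^0_3$. All the new combinatorial content needed for this corollary is already encapsulated in Proposition \ref{prop:AHT_to_IPT2}, so no further construction is required.
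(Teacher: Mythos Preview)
Your proposal is correct and follows essentially the same route as the paper: $\AHT \to \IPT^2 \to \SRT^2 \to B\Sigma^0_3$. The only differences are in the citations you thread through: the paper invokes Proposition~3.3 of \cite{Dza-Hir:11} directly for $\IPT^2 \to \SRT^2$ (rather than going via $\D^2_k$ and \cite{Cho-Lem-Yan:10}), and cites Corollary~11.5 of \cite{Cho-Joc-Sla:01} for $\SRT^2 \vdash B\Sigma^0_3$, which is the ``known fact'' you allude to.
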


\begin{proof}
By Corollary 11.5 of \cite{Cho-Joc-Sla:01} we have that 
$\RCA_0 + \SRT^2 \vdash B\Sigma^0_3$. By Proposition 3.3 of \cite{Dza-Hir:11} 
we have $\RCA_0 + \IPT^2 \vdash \SRT^2$. 
By Proposition \ref{prop:AHT_to_IPT2} we have $\RCA_0 + \AHT \vdash \IPT^2$. 
This concludes the proof. 
\end{proof}

\section{Adjacent Hindman's Theorem and $\Sigma^0_2$-induction}\label{sec:Sigma2}

In this section we give a direct proof that the Adjacent Hindman's Theorem implies $\Sigma^0_2$-induction.
The proof -- perhaps interestingly -- is an easy adaptation of a recent proof 
by Kolodziejczyk et alii \cite{Kol-Mic-Pra-Skr:16} 
showing that the Ordered Ramsey Theorem implies $\I\Sigma^0_2$.

\begin{proposition} Over $\RCA_0$,
$\AHT$ implies $\I\Sigma^0_2$.
\end{proposition}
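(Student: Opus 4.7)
The plan is a proof by contradiction. Suppose $\I\Sigma^0_2$ fails: there exist a $\Sigma^0_2$ formula $\varphi(x)\equiv\exists y\forall z\,\theta(x,y,z)$ (with $\theta\in\Delta^0_0$) and a parameter $a$ such that $\varphi(0)$ and $\forall x(\varphi(x)\to\varphi(x+1))$ hold, while $\neg\varphi(a)$ holds. Since $\AHT$ implies $\IPT^2$ (Proposition \ref{prop:AHT_to_IPT2}), I have $\IPT^2_{a+2}$ at my disposal. My plan is to apply it to a two-dimensional coloring encoding the approximation of $\varphi$ on $[0,a]$ so as to pin down a ``boundary'' value $c^*$, and then contradict the assumed closure of $\varphi$ under successor.

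Define $f:[\Nat]^2\to\{0,1,\dots,a+1\}$ by
$$ f(p,q)\,=\,\min\bigl\{x\le a\,:\,\forall y\le p\,\exists z\le q\,\neg\theta(x,y,z)\bigr\}, $$
setting $f(p,q)=a+1$ if the set is empty. Applying $\IPT^2_{a+2}$ yields infinite sets $\langle H_1,H_2\rangle$ and a color $c^*$ such that $f(p,q)=c^*$ for every increasing pair $(p,q)\in H_1\times H_2$. I would then argue three things. First, $c^*\le a$: otherwise, fixing any $p\in H_1$, by $B\Sigma^0_1$ (available in $\RCA_0$) applied to $\neg\varphi(a)$ one obtains a bound $Z$ with $\forall y\le p\,\exists z\le Z\,\neg\theta(a,y,z)$, and picking $q\in H_2$ with $q>\max(p,Z)$ puts $a$ into the minimised set at $(p,q)$, contradicting $c^*=a+1$. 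Second, $\neg\varphi(c^*)$: given any $y$, choose $p\in H_1$ with $p\ge y$ and any $q\in H_2$ with $q>p$; since $c^*$ lies in the minimised set at $(p,q)$, there is $z\le q$ with $\neg\theta(c^*,y,z)$. Third, if $c^*\ge 1$, then $\varphi(c^*-1)$, and by closure under successor $\varphi(c^*)$, contradicting $\neg\varphi(c^*)$. The case $c^*=0$ contradicts $\varphi(0)$ directly.

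The main obstacle is establishing $\varphi(c^*-1)$ in the third point. The idea is to fix $p\in H_1$ and, for each $q\in H_2$ with $q>p$, let $w(q)$ be the least $y\le p$ with $\forall z\le q\,\theta(c^*-1,y,z)$; such a $y$ exists because $c^*-1$ fails the minimised condition at $(p,q)$. The map $q\mapsto w(q)$ is non-decreasing (a stronger constraint as $q$ grows) and bounded by $p$, so it eventually stabilises to some $y^*\le p$; this stabilisation of a bounded monotone sequence is provable from $\I\Sigma^0_1$, already present in $\RCA_0$. By the cofinality of $H_2$ in $\Nat$, this implies $\forall z\,\theta(c^*-1,y^*,z)$, that is, $\varphi(c^*-1)$. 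The whole argument is then formalisable in $\RCA_0+\AHT$.
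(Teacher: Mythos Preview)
Your argument is correct, but it takes a somewhat different route from the paper's own proof. The paper applies $\AHT_{a+2}$ \emph{directly} to a one-dimensional coloring $D(n)$ that depends only on $\lambda(n)$ and $\mu(n)$, namely $D(n)=\max\{x\le a+1:\forall x'<x\,\forall y<\lambda(n)\,\exists z\le\mu(n)\,A(x',y,z)\}$ for a $\Pi^0_2$ formula $\phi(x)\equiv\forall y\exists z\,A(x,y,z)$; adjacent sums then give access to arbitrary pairs $(\lambda(h_i),\mu(h_j))$ by the Apartness Condition. You instead first pass through the implication $\AHT\to\IPT^2$ and then apply $\IPT^2_{a+2}$ to a genuine pair-coloring. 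Since the paper's $D$ is precisely the composition of a pair-coloring with the $\lambda,\mu$-encoding used in the proof of $\AHT\to\IPT^2$, the two arguments are essentially the same combinatorics presented at different levels of abstraction; your version makes the two-step structure explicit, while the paper's is self-contained and does not rely on the earlier $\IPT^2$ result. Your use of the $\Sigma^0_2$ form with a $\min$ is just the dual of the paper's $\Pi^0_2$ form with a $\max$.

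One remark on your third point: the monotone-stabilisation argument for $w(q)$ is fine, but it can be replaced by a one-line contrapositive parallel to your first point (and to the paper's ``Fact~1''). Assume $\neg\varphi(c^*-1)$, i.e.\ $\forall y\,\exists z\,\neg\theta(c^*-1,y,z)$; fix $p\in H_1$, apply $B\Sigma^0_1$ to bound the $z$'s for $y\le p$ by some $Z$, and pick $q\in H_2$ with $q>\max(p,Z)$. Then $c^*-1$ lies in the minimised set at $(p,q)$, contradicting the minimality of $c^*$. This avoids introducing $w$ altogether.
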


\begin{proof}
Let $\phi(x)$ be $\Pi^0_2$: 
$$ \phi(x) \equiv \forall y \exists z A(x,y,z).$$
Suppose $\phi(0)$ and $\forall x (\phi(x)\to \phi(x+1))$ hold. 
We prove that $\phi(a)$ holds. 

Let $D:\Nat\to [0,a+1]$ be defined as follows.
$$ D(n) := \max\{ x\leq a+1 \,:\, \forall x' < x \forall y < \lambda(n)\exists z \leq \mu(n) A(x',y,z)\}.$$

Let $H$ be a witness of $\AHT_{a+2}$ for $D$. 
Let the color of $AS(H)$ under $D$ be $m$.

\begin{fact} For $x \leq a+1$, if for all $x' < x$ we have $\phi(x')$, then $m \geq x$.
\end{fact}

\begin{proof}
For all $i\in H$ and $x'<x$ we have that $\phi(x')$ implies $\forall y < \lambda(i) \exists z A(x',y,z)$. 
By two applications of $\Sigma^0_1$-collection there exists a global bound $v$ such that 
$$ \forall x' < x \forall y < \lambda(i) \exists z \leq v A(x',y,z).$$
Since $H$ is infinite there exists $j\in H$ such that $\mu(j)\geq v$ and $j > i$.
Then we have
$$ \forall x' < x \forall y < \lambda(i) \exists z \leq \mu(j) A(x',y,z).$$
Since $D(i+(i+1)+\dots+(j-1)+j)=m$ we have that
$m$ is the maximum in $[0,a+1]$ such that:
$$ \forall x'< m \forall y < \lambda(i+(i+1)+\dots+(j-1)+j) \exists z \leq \mu(i+(i+1)+\dots+(j-1)+j)A(x',y,z).$$
Since $H$ satisfies the Apartness Condition we have that $m$ is the maximum in $[0,a+1]$ such that:
$$ \forall x'< m \forall y < \lambda(i) \exists z \leq \mu(j)A(x',y,z).$$
Therefore $m \geq x$.
\end{proof}

\begin{fact} For any $x'< m$, $\phi(x')$ holds. 
\end{fact}

\begin{proof}
Take $x' < m$, and any $y$. Since $H$ is infinite there exists $i > y$
such that $i\in H$. Since $D(i)=m$ we have that 
$$\forall y < \lambda(i) \exists z \leq \mu(i) A(x',y,z).$$
Thus there exists $z$ such that $A(x',y,z)$. 
\end{proof}

Now reason as follows. If $m=a+1$ then by Fact 2 $\phi(a)$ holds. 
Suppose $m-1 < a$. Since $\phi(0)$ holds, by Fact 1 we have
$m \geq 1$. By Fact 2 for all $x'<m$, $\phi(x')$ holds. 
By inductive assumption, since $\phi(m-1)$ holds, we know that
$\phi(m)$ holds. So for all $x < m+1$, $\phi(x')$ holds. 
By Fact 1 then $m \geq m+1$, which is impossible.
\end{proof}

We can also give the following short proof. As shown by Kolodziejczyk et alii
in \cite{Kol-Mic-Pra-Skr:16}
failure of $\Sigma^0_2$-induction implies the existence of an $a\in\Nat$ and of 
an infinite word $\alpha \in \{0,\dots,a+1\}^\Nat$ such that there exists no 
highest letter $i$ that appears infinitely often in $\alpha$. Let $D:\Nat \to [0,a+1]$
be defined as follows. 
$$D(n) = \max\{\alpha(k)\,:\, \lambda(n)\leq k \leq \mu(n)\}.$$
Let $H$ be an infinite set witnessing $\AHT_{a+2}$ for $D$. 
Let the color of $H$ under $D$ be $m$. Then for all $i < j$ in $H$ we have 
$$ D(i+\dots+j) = \max \{\alpha(k)\,:\, \lambda(i+\dots + j)\leq k \leq \mu(i+\dots + j)\} = \{\alpha(k)\,:\, \lambda(i)\leq k \leq \mu(j)\} = m.$$
Therefore $m$ is the highest letter occurring infinitely often in $\alpha$.

\section{Conclusions}
We conclude with a speculation: Blass \cite{Bla:05} conjectured that the strength of 
Hindman's Theorem might be growing with the length of the finite sums whose homogeneity
is guaranteed. The case of the Adjacent Hindman's Theorem might indicate that a measure of complexity for
Hindman's Theorem should not only consider the length of the sums but -- more importantly --
the {\em structure} according to which the elements of the sums are picked in the 
homogeneous set. This idea can be appropriately formalized in terms of trees labeled 
by integers and gives rise to a family of Hindman-type principles that might deserve
attention.

\end{document}